\renewcommand{\epsilon}{\varepsilon}
\newcommand{\newsection}[1]
{\subsection{#1}\setcounter{theorem}{0} \setcounter{equation}{0}
\par\noindent}
\newtheorem{theorem}{Theorem}
\newtheorem{lemma}[theorem]{Lemma}
\newtheorem{corr}[theorem]{Corollary}
\newtheorem{proposition}[theorem]{Proposition}
\newtheorem{deff}[theorem]{Definition}
\newcommand{\bth}{\begin{theorem}}
\newcommand{\ble}{\begin{lemma}}
\newcommand{\bcor}{\begin{corr}}
\newcommand{\bdeff}{\begin{deff}}
\newcommand{\bprop}{\begin{proposition}}
\newcommand{\ele}{\end{lemma}}
\newcommand{\ecor}{\end{corr}}
\newcommand{\edeff}{\end{deff}}
\newcommand{\eprop}{\end{proposition}}
\newcommand{\Rt}{{\mathbb R}^2}
\newcommand{\la}{\lambda}
\newcommand{\e}{\varepsilon}
\renewcommand{\l}{\lambda}
\newcommand{\supp}{\text{supp }}
\renewcommand{\Pi}{\varPi}
\renewcommand{\epsilon}{\varepsilon}
\newcommand{\R}{{\mathbb R}}
\newcommand{\sqrtd}{\sqrt{-\Delta_g}}
\newcommand{\dgt}{d_{\tilde g}}
\newcommand{\tg}{\tilde g}
\newcommand{\sqrtg}{\sqrt{\Delta_{\tilde g}}}
\newcommand{\sqdt}{\sqrt{-\Delta_{\tilde g}}}
\newcommand{\Rtwo}{{\mathbb R}^2}
\newcommand{\pg}{\phi_{\gamma,\alpha}}
\begin{document}

%	\subjclass[2010]{Primary, 35F99; Secondary 35L20, 42C99}
%	\keywords{Eigenfunctions, negative curvature}

\title
%{A note on geodesic period integrals of eigenfunctions}
{On  integrals of eigenfunctions over geodesics}

	\thanks{The authors were supported in part by the NSF grant DMS-1069175 and the Simons Foundation.}

	\author{Xuehua Chen}
	\author{Christopher D. Sogge}
	\address{Department of Mathematics,  Johns Hopkins University,
	Baltimore, MD 21218}

\begin{abstract}  If $(M,g)$ is a compact Riemannian surface then the integrals of $L^2(M)$-normalized eigenfunctions $e_j$ over  geodesic segments of fixed
length are uniformly bounded.
Also, if $(M,g)$ has negative curvature  and $\gamma(t)$ is a geodesic parameterized by arc length, the measures $e_j(\gamma(t))\, dt$ on $\R$
tend to zero in the sense of distributions
 as the eigenvalue $\la_j\to \infty$, and so integrals of eigenfunctions over periodic geodesics tend to zero as $\la_j\to \infty$.  The assumption of negative curvature is necessary for the latter result.
\end{abstract}

\maketitle

\newsection{General results}

If $M$ is a compact hyperbolic surface   Good~\cite{Gd} and Hejhal~\cite{He}, using Kuznecov formulae,
showed that if $\gamma$ is a periodic geodesic and $ds$ is the associated arc length measure then
\begin{equation}\label{1}
\Bigl|\int_\gamma e_\la \, ds\Bigr|\le C_\gamma,
\end{equation}
with $e_\la$ denoting the $L^2$-normalized eigenfunctions on $M$, i.e., $-\Delta_g e_\la=\la^2e_\la$, and $\|e_\la\|_{L^2(M)}=1$.

This result was  generalized by Zelditch~\cite{ZK} who showed, among many other things, that given any $n$-dimensional compact Riemannian manifold, one has
uniform bounds for integrals of eigenfunctions over closed hypersurfaces.\footnote{We are grateful to Steve Zelditch for helpful comments and referring us to the work based on the Kuznecov formula.} 
Moreover, if $\la_j$ are the eigenvalues of $\sqrt{-\Delta_g}$ and $a_j(\gamma)$ denotes the integral in \eqref{1} with
$\la=\l_j$, then \cite[Lemma 3.1]{ZK} says that $\sum_{\la_j\le \la}|a_j(\gamma)|^2=c_\gamma \,  \la +O(1)$, which implies \eqref{1}.  Note that since,
by the Weyl law $\#\{j: \, \la_j\le \la\}\approx \la^2$, Zelditch's formula says that most of the $a_j(\gamma)$ are much smaller than $1$.

Reznikov~\cite{rez} discussed this problem  and, moreover, initiated work on the related problem of obtaining restriction estimates for geodesics.  The sharp $L^2$ estimates for general Riemannian surfaces were obtained by Burq, G\'erard and Tzvetkov~\cite{burq}.  If $\varPi$ denotes the space of 
unit length geodesics in a two-dimensional compact Riemannian manifold, then one of the results in \cite{burq} is that we have bounds
of the form
\begin{equation}\label{2}
\Bigl(\int_\gamma |e_\la|^2 \, ds\Bigr)^{1/2}\lesssim \la^{\frac14}\|e_\la\|_{L^2(M)}, \quad \gamma \in \varPi.
\end{equation}
This estimate is sharp since it is saturated by the highest weight spherical harmonics on $S^2$.  Recently, improvements under the assumption of nonpositive curvature have been obtained by Sogge and Zelditch~\cite{SZ4} and Chen and Sogge~\cite{ChSo}.  Work showing how these restriction estimates are related to $L^p(M)$ estimates for eigenfunctions is in Bourgain~\cite{bourgainef} and Sogge~\cite{Sokakeya}.

Returning to \eqref{1}, note that the estimate cannot be improved when the compact hyperbolic surface $M$ is replaced by the standard
two-sphere  $S^2$ or two-torus
${\mathbb T}^2$.  For on $S^2$ zonal functions of even order saturate the bound, while for every
periodic geodesic on ${\mathbb T}^2$ one can find a sequence of eigenvalues $\la_j\to\infty$ and corresponding $L^2$-normalized eigenfunctions $e_{\la_j}$ having
the property that $e_{\la_j}\equiv 1$ on $\gamma$.

We shall %generalize the results of \cite{Pitt} and \cite{rez} and 
start by giving a quick proof %of slight generalization 
of a result in \cite{ZK}
saying that we have the analog of \eqref{1} for all geodesic segments in  any Riemannian surface.  The proof will serve as a template for the improvements in the next section of the bounds in \eqref{1} for Riemannian surfaces of negative curvature which appear to be new.

\begin{theorem}\label{theorem1}  Let $(M,g)$ be a two-dimensional compact Riemannian manifold.  Then there is a constant $C=C(M,g)$ so that
\begin{equation}\label{3}
\Bigl|\int_\gamma e_\la \, ds\Bigr|\le C\|e_\la\|_{L^2(M)}, \quad \gamma \in \varPi.
\end{equation}
\end{theorem}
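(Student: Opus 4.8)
The plan is to prove the uniform bound by reducing the geodesic integral to the action of a spectral cutoff operator and then estimating its kernel. Since $\gamma \in \varPi$ ranges over unit-length geodesics, I would first fix such a $\gamma$, parameterize it by arc length as $\gamma(s)$ for $s$ in a fixed interval, and write
\begin{equation*}
\int_\gamma e_\la \, ds = \int e_\la(\gamma(s))\, ds.
\end{equation*}
The key idea is to introduce a reproducing operator adapted to the spectral parameter $\la$. Choose a fixed even function $\chi \in \Coi(\R)$ with $\chi(0)=1$, and form the operator $\chi(T(\sqrtd - \la))$ for an appropriate $T$; applied to $e_\la$ it returns $e_\la$ up to a harmless constant, since $\chi(T(\la_j-\la))$ is comparable to $1$ at the relevant eigenvalue. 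This lets me replace $e_\la$ by $\chi(T(\sqrtd-\la))e_\la$ in the integral without changing it (up to constants).

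Next I would express $\chi(T(\sqrtd-\la))$ via the wave equation, writing it as $\frac{1}{2\pi}\int \hat\chi(t/T)\, e^{it\la}\, \cos(t\sqrtd)\, \frac{dt}{T}$ (or the analogous half-wave version), so that the Schwartz kernel is governed by the finite-propagation-speed properties of $\cos(t\sqrtd)$. Choosing $T$ to be a small fixed constant keeps $t$ in the regime where the Hadamard parametrix for the wave operator on $(M,g)$ is valid and the kernel has the same leading singularity as on $\R^2$. The quantity I need to bound is then
\begin{equation*}
\int\!\!\int K_\la(\gamma(s),\gamma(s'))\, e_\la(\gamma(s'))\, ds'\, ds,
\end{equation*}
so by Cauchy--Schwarz and the $L^2$-normalization of $e_\la$ it suffices to show that the operator with kernel $K_\la(\gamma(s),\gamma(s'))$, acting on functions of $s'$ along the geodesic, has operator norm bounded uniformly in $\la$ and in $\gamma \in \varPi$.

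The main technical step, and the place I expect the real work to be, is estimating this restricted kernel $K_\la(\gamma(s),\gamma(s'))$. Using the parametrix, $K_\la$ is an oscillatory integral whose phase, restricted to the geodesic, behaves like $\la$ times the distance $d_g(\gamma(s),\gamma(s'))$, and whose amplitude carries the natural $\la^{1/2}$-type weight from the two-dimensional wave kernel. I would show that along the diagonal $K_\la(\gamma(s),\gamma(s'))$ is bounded by $C\la \,(1+\la|s-s'|)^{-N}$ for large $N$ away from $s=s'$, together with an integrable or $O(\sqrt{\la}\,)$ singularity near $s=s'$; the rapid decay comes from non-stationary phase once $|s-s'|$ exceeds $\la^{-1}$, since $\gamma$ is a geodesic and $\frac{d}{ds}d_g(\gamma(s),\gamma(s'))$ is bounded below there. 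A Schur-test argument—bounding $\sup_s \int |K_\la(\gamma(s),\gamma(s'))|\, ds'$—then gives a uniform operator-norm bound, and hence \eqref{3}. The uniformity over $\varPi$ follows because all the parametrix constants depend only on $(M,g)$ and the fixed length of $\gamma$, not on the particular geodesic, so compactness of $M$ and of $\varPi$ seals the argument.
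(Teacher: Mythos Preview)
There are two genuine gaps in your plan, and both occur at the ``main technical step''.

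First, the displayed identity
\[
\int_\gamma e_\la\,ds \;=\; \iint K_\la(\gamma(s),\gamma(s'))\,e_\la(\gamma(s'))\,ds'\,ds
\]
is not correct: the kernel of $\chi(T(\sqrtd-\la))$ acts on functions on $M$, so the second variable must be integrated over $M$, not over $\gamma$. Consequently the Cauchy--Schwarz step cannot invoke ``the $L^2$-normalization of $e_\la$'' as you wrote, because $\|e_\la\|_{L^2(\gamma)}$ can be as large as $c\la^{1/4}$ (this is exactly the Burq--G\'erard--Tzvetkov bound \eqref{2}, which is saturated). The correct duality is the $TT^*$ one: apply Cauchy--Schwarz over $M$ to reduce to bounding
\[
\int_0^1\!\!\int_0^1 \bigl(\chi^2(\la-\sqrtd)\bigr)(\gamma(t),\gamma(s))\,dt\,ds,
\]
which is precisely \eqref{5} in the paper.

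Second, the claimed pointwise bound $|K_\la(\gamma(s),\gamma(s'))|\le C\la(1+\la|s-s'|)^{-N}$ is false, and the justification (``non-stationary phase \dots\ since $\tfrac{d}{ds}d_g(\gamma(s),\gamma(s'))$ is bounded below'') confuses an $s$-derivative of the phase with an internal frequency variable. Once the parametrix is evaluated, there are no integration variables left; the restricted kernel has the form \eqref{6}, i.e.\ $\la^{1/2}a_\pm(\la;|t-s|)e^{\pm i\la|t-s|}+O(1)$ with $|a_\pm(\la;r)|\lesssim r^{-1/2}$. A Schur test on absolute values therefore gives only $\sup_t\int_0^1\la^{1/2}|t-s|^{-1/2}\,ds\sim \la^{1/2}$, not $O(1)$. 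The uniform bound comes not from size but from \emph{oscillation}: because $\gamma$ is a geodesic the phase is exactly $\la|t-s|$, and one integrates by parts in $t$ (or $s$) using the symbol bounds \eqref{7}--\eqref{8} to get \eqref{9}. That cancellation is the heart of the argument, and it is missing from your outline.
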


\begin{proof}  Fix an even function $\rho\in {\mathcal S}(\R)$ satisfying $\rho(0)=1$ and $\Hat \rho(t)=0$, $|t|\ge 1/4$, assuming, as we may, that
the 
injectivity radius of $(M,g)$ is ten or more.  Then since $\rho(\la-\sqrtd))e_\la =e_\la$, in order to prove \eqref{2}, it suffices to show that
\begin{equation}\label{4}
\Bigl|\int_\gamma \rho(\la-\sqrtd)f\, ds\Bigr|\le C\|f\|_{L^2(M)}, \quad \gamma \in \varPi.
\end{equation}
Let $\gamma(t)$, $0\le t\le 1$, be a parameterization of $\gamma$ by arc length and
$$\rho(\la-\sqrtd)(x,y)=\sum \rho(\la-\la_j)e_j(x)\overline{e_j(y)}$$
denote the kernel of the operator in \eqref{4}.  Here, $\{e_j\}$ is an orthonormal basis of eigenfunctions with eigenvalues $\{\la_j\}$.

By Schwarz's inequality, we would have \eqref{4} if we could show that
$$\int_M\Bigl|\int_0^1 \sum_j  \rho(\la-\la_j)e_j(\gamma(t))\overline{e_j(y)}\, dt\Bigr|^2 \, dV_g(y)\le C.$$
By orthogonality, if $\chi(\tau)=(\rho(\tau))^2$, this is equivalent to showing that
\begin{equation}\label{5}
\left| \int_0^1\int_0^1 \sum_j \chi(\la-\la_j)e_j(\gamma(t)) \, \overline{e_j(\gamma(s))} \, dt ds\right|\le C.
\end{equation}
Next, we note that the proof of \cite[Lemma 5.1.3]{S3} shows that if $d_g$ denotes the Riemannian distance then we can write
\begin{equation}\label{6}
\sum_j \chi(\la-\la_j)e_j(x) \, \overline{e_j(y)}=\la^{\frac12}\sum_\pm a_\pm(\la;d_g(x,y))e^{\pm i \la d_g(x,y)} +O(1),
\end{equation}
where for every fixed $j=0,1,2,\dots$ we have the uniform bounds
\begin{equation}\label{7}
\Bigl|\frac{d^j}{dr^j}a_\pm(\la;r)\Bigr|\le C_jr^{-j-\frac12}, \quad \text{if } \, \, r\ge \la^{-1},
\end{equation}
and
\begin{equation}\label{8}
|a_\pm(\la,r)|\le \la^{\frac12}, \quad \text{if } \, \, r\in [0,\la^{-1}].
\end{equation}
To obtain \eqref{6}-\eqref{8}, as in \cite[\S 5.1]{S3} one uses H\"ormander's parametrix for the half-wave operators 
$e^{it\sqrtd}$, as well as the fact that $\Hat \chi(t)=0$ for $|t|>1$ and our assumption that the injectivity radius of $(M,g)$ is ten or more.

Since $d_g(\gamma(t),\gamma(s))=|t-s|$, we conclude that we would have \eqref{5} if
\begin{equation}\label{9}
\la^{\frac12}\Bigl|\int_0^1\int_0^1 e^{\pm i\la|t-s|}a_\pm(\la;|t-s|)\, dtds\Bigr|\le C.
\end{equation}
Since this is a trivial consequence of \eqref{7} and \eqref{8}, the proof is complete.
\end{proof}

Reznikov in \cite{rez} and \cite{rez2} also discusses the problem of integrals of eigenfunctions over geodesics circles in compact hyperbolic surfaces and obtains the analog of \eqref{1} for them.  More general results were obtained earlier by Zelditch~\cite[Corollary 3.3]{ZK}, and the proof of Theorem~\ref{theorem1} can also
be used to obtain these special cases of the latter: 
%The proof of Theorem~\ref{theorem1} also yields the following more general result %which is also essentially 
%in \cite{ZK}.

\begin{theorem}\label{theorem2}  Let $(M,g)$ be a compact two-dimensional Riemannian manifold.  Then if $\sigma$ is a unit-length curve in $M$ there
is a constant $C_\sigma$ so that
\begin{equation}\label{10}
\Bigl| \int_\sigma e_\la \, ds\Bigr|\le C_\sigma \|e_\la\|_{L^2(M)}.
\end{equation}
Similar bounds with a uniform constant hold for small smooth perturbations of $\sigma$.
\end{theorem}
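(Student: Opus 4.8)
The plan is to rerun the proof of Theorem~\ref{theorem1} essentially verbatim, with the fixed unit-length curve $\sigma$ in place of the geodesic $\gamma$, and to isolate the single point at which geodesy was used, namely the identity $d_g(\gamma(t),\gamma(s))=|t-s|$. Parameterizing $\sigma$ by arc length on $[0,1]$ and writing $r(t,s)=d_g(\sigma(t),\sigma(s))$, the reductions leading to \eqref{4} and \eqref{5}, together with the pointwise expansion \eqref{6}--\eqref{8}, carry over word for word; here I would note that, since $\sigma$ has length $1$, any two of its points are at distance at most $1$, hence well inside the injectivity radius, so the connecting geodesic is unique and $r$ is smooth wherever $\sigma(t)\neq\sigma(s)$. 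Thus it suffices to show that
\[
I_\pm=\la^{1/2}\int_0^1\!\!\int_0^1 e^{\pm i\la r(t,s)}\,a_\pm(\la;r(t,s))\,dt\,ds
\]
is bounded by a constant $C_\sigma$.

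First I would treat the region near the diagonal. For a unit-speed curve one has $r(t,s)=|t-s|\,(1+O_\sigma(|t-s|^2))$, and, more usefully, $|\partial_t r(t,s)|\ge c_\sigma>0$ for $0<|t-s|\le\delta_\sigma$: indeed $\partial_t r$ is the cosine of the angle between $\dot\sigma(t)$ and the connecting geodesic, which is close to $\pm1$ near the diagonal, mirroring $\partial_t|t-s|=\sgn(t-s)$. On $\{|t-s|\le\delta_\sigma\}$ the argument behind \eqref{9} then applies unchanged: one splits off $\{r\le\la^{-1}\}$, where \eqref{8} and the area bound $O(\la^{-1})$ give $O(1)$, and on the complement one integrates by parts in $t$ using $|\partial_t r|\ge c_\sigma$ together with \eqref{7}, reproducing exactly the computation that proved \eqref{9}. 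This is the main term and contributes $O_\sigma(1)$. Transversal self-intersections, if present, occur at finitely many points $(t_0,s_0)$ and are handled just like the diagonal, in fact more favorably, since there $\{r\le\la^{-1}\}$ is a two-dimensional neighborhood of area $O(\la^{-2})$.

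It then remains to bound the off-diagonal contribution, where $|t-s|\ge\delta_\sigma$ and, away from the finitely many self-intersections, $r\ge c_\sigma'>0$ is bounded below. On this set \eqref{7} shows that $a_\pm(\la;r(t,s))$ and all its $(t,s)$-derivatives are $O_\sigma(1)$, so this piece is a genuine oscillatory integral $\la^{1/2}\iint e^{\pm i\la r}\,b\,dt\,ds$ with a fixed smooth phase $r$ and a uniformly bounded smooth amplitude $b$. I would estimate it by stationary phase, using a partition of unity subordinate to the sets $\{|\partial_t r|>c\}$, $\{|\partial_s r|>c\}$, and small neighborhoods of the finitely many points where $\nabla_{t,s}r=0$: integration by parts in $t$, respectively $s$, gives $O(\la^{-N})$ on the first two sets, while each critical point contributes $O(\la^{-1/2})$ or better. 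Hence this piece is $O_\sigma(\la^{-1/2})$, and after the prefactor $\la^{1/2}$ it is $O_\sigma(1)$, completing the bound for $I_\pm$ and hence \eqref{10}.

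I expect the main obstacle to be precisely this last step. Away from the diagonal the phase is no longer the non-stationary $\la|t-s|$ of \eqref{9}, and one must control the critical points of $(t,s)\mapsto d_g(\sigma(t),\sigma(s))$, that is, the configurations in which the connecting geodesic meets $\sigma$ orthogonally at both endpoints; verifying that their contributions do not exceed $\la^{-1/2}$ is exactly what makes the constant depend on $\sigma$. This is immediate when the critical points are nondegenerate, which is the generic situation and in particular covers the real-analytic geodesic circles considered by Reznikov, and in general is governed by the curvature and self-intersection data of $\sigma$. Finally, since all the quantities used --- $\delta_\sigma$, $c_\sigma$, $c_\sigma'$, and the finitely many critical values together with their nondegeneracy --- vary continuously and stay bounded on a $C^k$-neighborhood of $\sigma$, the same constant serves for all sufficiently small smooth perturbations, which yields the last assertion of the theorem.
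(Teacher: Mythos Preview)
Your approach is exactly what the paper intends: it states Theorem~\ref{theorem2} without proof, remarking only that ``the proof of Theorem~\ref{theorem1} can also be used,'' so your reduction to the oscillatory integral $I_\pm$ with phase $r(t,s)=d_g(\sigma(t),\sigma(s))$ and your near-diagonal treatment are precisely the intended adaptation and are carried out correctly.

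There is, however, a genuine gap in your off-diagonal analysis. You assert that $\nabla_{t,s}r$ vanishes at only \emph{finitely many} points and that each contributes $O(\la^{-1/2})$; neither claim is justified as stated. For instance, if $\sigma$ contains two parallel straight subsegments at distance $d$ (joined smoothly), the critical set is an entire arc, along which the Hessian of $r$ is degenerate (one checks $\partial_t^2r=\partial_s^2r=|\partial_t\partial_s r|=1/d$, so $\det=0$). Your hedge that the degenerate case ``is governed by the curvature and self-intersection data of $\sigma$'' does not close this.

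The fix is available in the paper itself, in Section~2. The computation proving \eqref{g.2} in Lemma~\ref{geomlemma} uses only the first variation formula and Gauss's lemma in normal coordinates; it nowhere uses that the curves are geodesics or that the curvature is negative, and so shows that $\partial_t\partial_s r(t_0,s_0)\ne 0$ at \emph{every} off-diagonal critical point of $r$ for an arbitrary curve $\sigma$. With this in hand, the dichotomy in the proof of Lemma~\ref{statlemma} applies locally near each point of the (compact) critical set: if $\partial_t^2r=0$ there, the Hessian has determinant $-(\partial_t\partial_s r)^2\ne0$ and the critical point is isolated and nondegenerate; otherwise $\partial_t^2r\ne0$ in a neighborhood and one-dimensional stationary phase in $t$ gives the uniform $O(\la^{-1/2})$ bound for each fixed $s$. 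A finite cover of the critical set by such neighborhoods then yields $I_\pm=O_\sigma(1)$ without any genericity assumption. Once you invoke this mixed-partial nonvanishing explicitly, your argument is complete.
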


\newsection{Improved results for negative curvature}

We conclude our note by showing that we can improve the bounds in \eqref{1}  if we assume that the curvature of $(M,g)$ is strictly negative.  As noted before, this assumption is necessary since the corresponding result is false for the two-sphere and the two-torus.  We shall now assume that
$\gamma(t)$, $t\in \R$, is a geodesic in $\R$ parameterized by arc length, and our main result is the following

\begin{theorem}\label{theoremneg}  Let $(M,g)$ be a  negatively curved compact 2-dimensional Riemannian manifold.  Then the 
measures $\{e_j(\gamma(t)) \, dt\}$ on $\R$ go to zero in the sense of distributions, by which we mean
that if $b\in C^\infty_0(\R)$ then
\begin{equation}\label{2.1}
\int b(t) \, e_j(\gamma(t)) \, dt \to 0, \quad \text{as } \, j\to \infty.
\end{equation}
Consequently, if $\gamma_{per}$ is a periodic geodesic of minimal period $\ell >0$, we have
$$\int_0^\ell e_j(\gamma_{per}(t))\, dt \to 0, \quad \text{as } \, \, j\to \infty.$$
%\begin{equation}\label{2.1}
%\limsup_{\la_j\to \infty}\int_\gamma e_{\la_j} \, ds =0, \quad \text{if } \, \, \gamma \in \varPi.
%\end{equation}
%As a result, we have the analog of \eqref{2.1} for every periodic geodesic $\gamma$ if the integral is taken over a single period.
\end{theorem}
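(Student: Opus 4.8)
The plan is to follow the template of Theorem~\ref{theorem1}, but to localize the spectrum in a window of width $1/T$ about $\la_j$ with $T=T(\la_j)\to\infty$, and to use strict negative curvature to tame the resulting long-time wave propagation. Write $a_j=\int b(t)\,e_j(\gamma(t))\,dt$ and fix an even $\chi\in\mathcal S(\R)$ with $\chi\ge0$, $\chi(0)=1$ and $\widehat\chi$ supported in $[-1,1]$. Since $\chi(T(\la-\sqrtd))e_j=\chi(0)e_j=e_j$ when $\la=\la_j$, and the kernel $\sum_k\chi(T(\la-\la_k))e_k(x)\overline{e_k(y)}$ is positive semidefinite, every summand below is nonnegative, so
\begin{equation*}
|a_j|^2\le\sum_k\chi\big(T(\la_j-\la_k)\big)\,|a_k|^2=\int_\R\!\int_\R b(t)\,\overline{b(s)}\;\chi\big(T(\la_j-\sqrtd)\big)(\gamma(t),\gamma(s))\,dt\,ds .
\end{equation*}
It therefore suffices to show that the right-hand side tends to $0$ for a suitable $T=T(\la_j)$.

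The second step is to represent this kernel by the long-time Hadamard parametrix. Since the Fourier transform of $\tau\mapsto\chi(T(\la-\tau))$ is supported in $|t|\le T$, I would lift to the universal cover $(\tilde M,\tg)$, which carries no conjugate points, and sum over the deck group $\Gamma$ to obtain $\chi(T(\la-\sqrtd))(x,y)=\sum_{\alpha\in\Gamma}\chi(T(\la-\sqdt))(\tilde x,\alpha\tilde y)$, in which only the $\alpha$ with $\dgt(\tilde x,\alpha\tilde y)\le T$ contribute; these number $O(e^{cT})$ by exponential volume growth. For the identity term $\alpha=e$ one has $\dgt(\tilde\gamma(t),\tilde\gamma(s))=|t-s|$, and the analogue of \eqref{6}--\eqref{8} gives a kernel $\sim T^{-1}\la^{1/2}\sum_\pm a_\pm(\la;|t-s|)e^{\pm i\la|t-s|}$, the extra factor $T^{-1}$ coming precisely from the width-$1/T$ localization. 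Running the Fresnel-type estimate of \eqref{9} on the autocorrelation $\int b(t)\overline{b(t-u)}\,dt$ extracts a gain of $\la^{-1/2}$, so the identity term is $O(1/T)$.

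The third step treats the return terms. For $\alpha\neq e$ with $d:=\dgt(\tilde\gamma(t),\alpha\tilde\gamma(s))\in[1,T]$ the kernel is $\sim T^{-1}\la^{1/2}\tilde a_\pm(\la;d)e^{\pm i\la d}$ with $|\tilde a_\pm|\lesssim d^{-1/2}$, so the $dt\,ds$-integral carries the phase $\pm\la\,\dgt(\tilde\gamma(t),\alpha\tilde\gamma(s))$. \emph{Here strict negative curvature enters decisively:} two distinct geodesics in a Cartan--Hadamard surface have a unique common perpendicular and the distance between them is strictly convex, so this phase has a nondegenerate critical point and two-dimensional stationary phase supplies a factor $\la^{-1}$. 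Thus each return term is $O(T^{-1}\la^{-1/2}d^{-1/2})$, and summing the $O(e^{ck})$ terms with $d\approx k\le T$ yields $O(T^{-1}\la^{-1/2}e^{cT})$. Choosing $T=\tfrac{1}{2c}\log\la$ makes both the identity and the return contributions $O(1/\log\la)$, so $a_j\to0$; in fact this gives the quantitative rate $|a_j|\lesssim(\log\la_j)^{-1/2}$. I expect the \textbf{main obstacle} to be exactly the uniform, quantitative nondegeneracy of these phases across the exponentially many $\alpha$, which rests on Jacobi-field/Rauch comparison for strictly negative curvature and degenerates for the flat torus and the round sphere—matching the stated necessity of the hypothesis.

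Finally, for the periodic geodesic I would run the identical argument with the arc-length measure $ds$ on the closed curve $\gamma_{per}$ in place of $b(t)\,dt$: this is a fixed finite measure on the compact manifold $M$, the identity term is again $O(1/T)$ by the same Fresnel cancellation applied to the (now periodic) autocorrelation, and the return analysis is unchanged. Hence $\int_0^\ell e_j(\gamma_{per}(t))\,dt\to0$ as $j\to\infty$, which is the asserted consequence.
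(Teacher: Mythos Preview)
Your overall architecture matches the paper's: reduce $|a_j|^2$ to a double integral of the kernel $\chi(T(\la-\sqrtd))(\gamma(t),\gamma(s))$, lift to the universal cover, sum over the deck group, handle the identity term as in Theorem~\ref{theorem1} to get $O(T^{-1})$, and treat the nonidentity terms by stationary phase in the distance function $\phi_{\gamma,\alpha}(t,s)=d_{\tilde g}(\tilde\gamma(t),\alpha\tilde\gamma(s))$. The paper does exactly this; see \eqref{2.3}, \eqref{2.8}, Lemma~\ref{geomlemma}, Lemma~\ref{statlemma}, and Lemma~\ref{lemma2.4}.

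Where your proposal departs from the paper---and where there is a genuine gap---is in the quantitative claim. You assert that ``this phase has a nondegenerate critical point and two-dimensional stationary phase supplies a factor $\la^{-1}$,'' and then sum $O(e^{cT})$ such contributions with $T\sim\log\la$ to produce a rate $(\log\la_j)^{-1/2}$. The paper does \emph{not} prove that the full Hessian of $\phi_{\gamma,\alpha}$ is nondegenerate at the critical point; Lemma~\ref{geomlemma} establishes only that $\partial_t\partial_s\phi_{\gamma,\alpha}(t_0,s_0)\ne0$, and Lemma~\ref{statlemma} is deliberately written to cover the case where the Hessian determinant may vanish, yielding only $|I(\la)|\le \varepsilon\la^{-1/2}+C_\varepsilon\la^{-1}$ rather than $O(\la^{-1})$. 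Consequently the paper obtains the estimate $CT^{-1}+C_T\la^{-1/2}$ with $C$ independent of $T$ but $C_T$ completely untracked, and concludes by fixing $T$ large first and then sending $\la\to\infty$. No rate is claimed. Your route would require (i) a proof that $\det\phi''_{\gamma,\alpha}(t_0,s_0)\ne0$ for every nonstabilizing $\alpha$, and (ii) uniform lower bounds on this determinant together with uniform upper bounds on higher derivatives of $\phi_{\gamma,\alpha}$ and of the amplitudes $a_\pm$ across all $O(e^{cT})$ relevant $\alpha$, so that the stationary-phase constants do not swamp the $\la^{-1}$ gain. You flag (ii) as the ``main obstacle'' but do not address it; (i) you simply assert.

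One further omission: when $\gamma$ is periodic, some $\alpha\ne Identity$ lie in the stabilizer of $\tilde\gamma$, so $\alpha(\tilde\gamma)=\tilde\gamma$ as a set and your ``two distinct geodesics with a unique common perpendicular'' picture does not apply. The paper treats this case separately (the paragraph following \eqref{2.23}): here $\phi_{\gamma,\alpha}(t,s)=|t-s-k\ell|$ with $|k\ell|\ge10$, there is no critical point on $\text{supp}\,b$, and a direct integration by parts gives $O_\alpha(\la^{-1/2})$. Finally, for the periodic-geodesic consequence the paper simply invokes a partition of unity to reduce to \eqref{2.1}, which is cleaner than rerunning the whole argument with the arc-length measure.
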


The second part of the lemma follows from the first part via a partition of unity argument.
The proof of \eqref{2.1} shares some similarities with the related $L^p(\gamma)$ restriction estimates for eigenfunctions of Sogge and Zelditch~\cite{SZ4} and Chen and Sogge~\cite{ChSo}.  In particular, the oscillatory integral arguments and simple geometric facts that we shall employ are very similar to those in \cite{ChSo}.

To prove \eqref{2.1} we may assume that the injectivity radius of $(M,g)$ is ten or more and that
$$\text{supp }b\subset [-\tfrac12,\tfrac12].$$
Next, we notice that if, as before, $\rho\in {\mathcal S}(\R)$ is even and satisfies $\rho(0)=1$ and $\Hat \rho(t)=0$, $|t|\ge 1/4$, then given
$T\gg 1$ we have $\rho(T(\la-\sqrtd)))e_\la =e_\la$.  As a result, in order to prove \eqref{2.1} it suffices to verify that if $T\gg 1$ then
\begin{equation}\label{2.2}
\Bigl|\int b(s) \bigl(\rho(T(\la-\sqrtd))f\bigr)(\gamma(s))\, ds\Bigr|\le \bigl(CT^{-\frac12}+C_T\la^{-\frac14}\bigr)\|f\|_{L^2(M)},
\end{equation}
where $C$ is independent of $T\gg1$, but not $C_T$.  Repeating the argument which showed how \eqref{5} implies \eqref{4}, we conclude
that if $\chi(\tau)=(\rho(\tau))^2$ and 
$$b(t,s)=b(t)b(s),$$
 then we would have \eqref{2.2} if we could show that
\begin{equation}\label{2.3}
\Bigl|\iint b(t,s)\sum_j \chi(T(\la-\la_j))e_j(\gamma(t))\overline{e_j(\gamma(s))}\, dt ds\Bigr|\le CT^{-1}+C_T\la^{-\frac12}.
\end{equation}
%In proving this, as before, we may assume that the injectivity radius of $(M,g)$ is ten or more.

Note that
$$\sum_j \chi(T(\la-\la_j))e_j(x)\overline{e_j(y)}=\frac1{2\pi T}\int \Hat \chi(\tau/T) e^{-i\tau\la} e^{i\tau \sqrtd}\, d\tau.$$
If we pick a bump function $\beta\in C^\infty_0(\R)$ satisfying
$$\beta(\tau)=1, \quad |\tau|\le 3, \, \, \, \text{and } \, \beta(\tau)=0, \, \, |\tau|\ge 4,$$
the proof of \eqref{5} shows that if $\Psi_T$ denotes the inverse Fourier transform of
$\tau\to \beta(\tau)\chi(\tau/T)$, then
\begin{multline*}\Bigl|\frac1{2\pi T}\iint\int b(t,s) \, \beta(\tau)\Hat  \chi(\tau/T) e^{-i\tau\la} \bigl(e^{i\tau \sqrtd}\bigr)(\gamma(t),\gamma(s))\, d\tau dt ds\Bigr|
\\
=T^{-1}\Bigl| \iint b(t,s) \sum_j \Psi_T(\la-\la_j)e_j(\gamma(t))\overline{e_j(\gamma(s))}\, dt ds\Bigr|
\le CT^{-1}.\end{multline*}
Here,
$ \bigl(e^{i\tau \sqrtd}\bigr)(x,y)=\sum_j e^{i\tau\la_j}e_j(x)\overline{e_j(y)}$
denotes the kernel of the half-wave operator $e^{i\tau \sqrtd}$.

Based on the preceding inequality, in order to prove \eqref{2.3}, it suffices to show that
\begin{multline}\label{2.4}
\Bigl|\frac1{2\pi }\iiint b(t,s)\,  (1-\beta(\tau))\Hat  \chi(\tau/T) e^{-i\tau\la} \bigl(e^{i\tau \sqrtd}\bigr)(\gamma(t),\gamma(s))
\, d\tau dt ds\Bigr|
\\
\le 1+C_T\la^{-\frac12}.\end{multline}
We shall need to use the fact that since $\Hat \chi =(2\pi)^{-1}\Hat \rho * \Hat \rho$, we have
\begin{equation}\label{2.5} \Hat \chi(\tau)=0, \quad |\tau|\ge \frac12,
\end{equation}
which means that the $\tau$ integrand in the left side of \eqref{2.4} vanishes when $|\tau|\ge T/4$.
We can make one more easy reduction.  If $\Phi_T$ denotes the inverse Fourier transform of
$\tau \to  (1-\beta(\tau))\chi(\tau/T)$ then $\Phi_T\in {\mathcal S}(\R)$ and consequently
$$\sum_j \Phi_T(\la+\la_j) \, e_j(\gamma(t))\overline{e_j(\gamma(s))}=O_{T,N}((1+\la)^{-N})$$
for any $N=1,2,3,\dots$.  Thus, by Euler's formula and \eqref{2.5}, in order to prove \eqref{2.4}, it suffices to show that
\begin{multline}\label{2.6}
\Bigl|\iint\int_{-T/2}^{T/2} b(t,s) \, (1-\beta(\tau))\Hat  \chi(\tau/T) e^{-i\tau\la} \bigl(\cos \tau\sqrtd \bigr)(\gamma(t),\gamma(s))
\, d\tau dt ds\Bigr|
\\
\le 1+C_T\la^{-\frac12}.\end{multline}
Here $(\cos \tau \sqrtd)(x,y)$ is the kernel for the map $C^\infty(M)\ni f\to u\in C^\infty(\R\times M)$, where $u(t,x)$ is the solution of the Cauchy problem
with initial data $(f,0)$, i.e.
\begin{equation}\label{2.7}(\partial_t^2-\Delta_g)u=0, \, \, \, u(0,\, \cdot\, )=f, \, \, \, \partial_t u(0, \, \cdot \, )=0.
\end{equation}

To be able to compute the integral in \eqref{2.6} we need to relate this wave kernel to the corresponding one in the universal cover for $(M,g)$.  Recall that by a theorem of Hadamard (see \cite[Chapter 7]{do Carmo}) for every point $P\in M$, the exponential map at $P$, $\exp_P: T_PM\to M$ is a covering map.
We might as well take $P=\gamma(0)$ to be the midpoint of the 
geodesic segment$\{\gamma(t): \, |t|\le \tfrac12\}$.  If we identify $T_PM$ with $\Rt$, and let $\kappa$ denote this exponential
map then $\kappa: \Rt\to M$ is a covering map.  We also will denote by $\tilde g$ the metric on $\Rt$ which is the pullback via $\kappa$ of the the metric
$g$ on $M$.  Also, let $\Gamma$ denote the group of deck transformations, which are the diffeomorphisms $\alpha$ from $\Rt$ to itself preserving $\kappa$, i.e., $\kappa = \kappa \circ \alpha$.  
Next, let
$$D_{Dir}=\{ \tilde y\in \Rt: \, d_{\tg}(0,\tilde y)<\dgt(0, \alpha(\tilde y)), \, \forall \alpha \in \Gamma, \, \, \alpha \ne Identity\}$$
be the Dirichlet domain for $(\Rt,\tg)$, where $\dgt(\, \cdot \, ,\, \cdot \, )$ denotes the Riemannian distance function for $\Rt$ corresponding to
the metric $\tg$.   We can then add to $D_{Dir}$ a subset of $\partial D_{Dir}=\overline{D_{Dir}}\backslash
\text{Int }(D_{Dir})$ to obtain a natural fundamental domain $D$, which has the property that $\Rt$ is the disjoint union of the
$\alpha(D)$ as $\alpha$ ranges over $\Gamma$ and $\{\tilde y\in \Rt: \, \dgt(0,\tilde y)<10\} \subset D$ since we are assuming that the injectivity
radius of $(M,g)$ is more than ten.  It then follows that we can identify every point $x\in M$ with the unique point $\tilde x\in D$ having the property
that $\kappa(\tilde x)=x$.  Let also $\tilde \gamma(t)$, $|t|\le \tfrac12$ similarly denote those points in $D$ corresponding to our geodesic segment
$\gamma(t)$, $|t|\le \tfrac12$ in $M$.  Then $\{\tilde \gamma(t): |t|\le \tfrac12\}$ is a line segment of unit length whose midpoint is the origin, and we shall denote just by
$\tilde \gamma$ the line through the origin containing this segment.  Note that $\tilde \gamma$ then is a geodesic in $\Rt$ for the metric $\tg$, and 
the Riemannian distance between two points on $\tilde \gamma$ agrees with their Euclidean distance.
Finally, if $\Delta_{\tg}$ denotes the Laplace-Beltrami operator associated to $\tg$ then since solutions of the Cauchy problem \eqref{2.7} correspond exactly
to periodic (i.e. $\Gamma$-invariant) solutions of the corresponding Cauchy problem associated to $\partial^2_t-\Delta_{\tg}$, we have the following
important formula relating the wave kernel on $(M,g)$ to the one for the universal cover $(\Rt,\tg)$:
\begin{equation}\label{2.8}
\bigl(\cos \tau \sqrtd\big)(x,y)=\sum_{\alpha\in \Gamma}\bigl(\cos \tau \sqrtg\bigr)(\tilde x, \alpha(\tilde y)).
\end{equation}

The simple geometric facts that we require is in the following variation of \cite[Lemma~3.2]{ChSo}:

\begin{lemma}\label{geomlemma}  Let $\tilde \gamma_1(t)$ and $\tilde \gamma_2(s)$ be two distinct geodesics in $({\mathbb R}^2, \tilde g)$ each parameterized by arc length.  Put
$$\phi(t,s)=d_{\tilde g}(\tilde \gamma_1(t),\tilde \gamma_2(s)).$$
Then if there is a point $(t_0,s_0)\in {\mathbb R}\times {\mathbb R}$ such that $\partial_t\phi(t_0,s_0)=\partial_s\phi(t_0,s_0)=0$
and $\gamma_1(t_0)\ne \gamma_2(s_0)$, then
\begin{equation}\label{g.1}
|\partial_s\phi(t,s)|+|\partial_t\phi(t,s)|\ne 0 \quad \text{if } \, \tilde \gamma_1(t)\ne \tilde \gamma_2(s) \, \, \, \text{and } \, (t,s)\ne (t_0,s_0),
\end{equation}
%Furthermore, at such a point we have
and
\begin{equation}\label{g.2}
\partial_t\partial_s\phi(t_0,s_0)\ne 0.
\end{equation}
\end{lemma}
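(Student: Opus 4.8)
The plan is to read off everything from the geometry of the distance function on the universal cover. By Hadamard's theorem recalled just before the lemma, $(\mathbb{R}^2,\tilde g)$ is a Cartan--Hadamard surface whose Gaussian curvature $K$ is strictly negative; in particular it is complete and simply connected, any two points are joined by a unique minimizing geodesic, and $\phi(t,s)=d_{\tilde g}(\tilde\gamma_1(t),\tilde\gamma_2(s))$ is smooth wherever $\tilde\gamma_1(t)\neq\tilde\gamma_2(s)$. Let $c_{t,s}$ be the unit-speed minimizing geodesic from $P=\tilde\gamma_1(t)$ to $Q=\tilde\gamma_2(s)$, of length $\phi(t,s)$, with initial unit tangent $e(t,s)$ at $P$ and terminal unit tangent $f(t,s)$ at $Q$. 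The first variation of arc length gives $\partial_t\phi=-\langle\tilde\gamma_1'(t),e(t,s)\rangle$ and $\partial_s\phi=\langle\tilde\gamma_2'(s),f(t,s)\rangle$. Hence a point with $\partial_t\phi=\partial_s\phi=0$ and $P\neq Q$ is precisely one at which $c_{t,s}$ meets both geodesics orthogonally, i.e.\ a common perpendicular; by hypothesis $c_{t_0,s_0}$ is such a common perpendicular, of some length $r_0>0$.

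For \eqref{g.1} I would show that in negative curvature the common perpendicular is unique, so that $(t_0,s_0)$ is the only critical point at which the two geodesics stay disjoint. A second critical point $(t_1,s_1)$ with $\tilde\gamma_1(t_1)\neq\tilde\gamma_2(s_1)$ would give a common perpendicular $P_1Q_1$ distinct from $P_0Q_0$. Since two distinct geodesics meet in at most one point we must have $P_0\neq P_1$ and $Q_0\neq Q_1$, so the segments $[P_0,P_1]\subset\tilde\gamma_1$, $[P_1,Q_1]$, $[Q_1,Q_0]\subset\tilde\gamma_2$ and $[Q_0,P_0]$ bound a nondegenerate geodesic quadrilateral with four right interior angles. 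Gauss--Bonnet then forces $\int_D K\,dA=0$ over the enclosed region $D$, contradicting $K<0$. This rules out any further critical point, which is exactly \eqref{g.1}.

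For \eqref{g.2} I would compute $\partial_t\partial_s\phi(t_0,s_0)$ by a Jacobi field computation along the common perpendicular $c_0=c_{t_0,s_0}$, equipped with its parallel unit normal field $E$. Fixing $t=t_0$ and varying $s$ gives the family of unit-speed geodesics issuing from the fixed point $P_0$ in the directions $e(t_0,s)$; its variation field $W(r)=\partial_s c_{t_0,s}(r)$ is a Jacobi field along $c_0$ with $W(0)=0$. Since covariant differentiation in $r$ and $s$ commute, $\tfrac{D}{ds}e(t_0,s)\big|_{s_0}=W'(0)$, and differentiating $\partial_t\phi=-\langle\tilde\gamma_1'(t),e\rangle$ in $s$ yields $\partial_t\partial_s\phi(t_0,s_0)=-\langle\tilde\gamma_1'(t_0),W'(0)\rangle$. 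By the Gauss lemma (a variation through unit-speed geodesics from a fixed point) $W$ is everywhere orthogonal to $c_0$, so $W'(0)=b'(0)E(0)$ where $b(r)=\langle W(r),E(r)\rangle$ solves the scalar Jacobi equation $b''+Kb=0$ with $b(0)=0$. The perpendicularity of $c_0$ to $\tilde\gamma_2$ at $Q_0$ (that is, $\partial_s\phi=0$) gives $\tilde\gamma_2'(s_0)=\pm E(r_0)$ and hence $|b(r_0)|=1$, so $b\not\equiv0$ and therefore $b'(0)\neq0$ by uniqueness for the ODE; the perpendicularity at $P_0$ (that is, $\partial_t\phi=0$) forces $\tilde\gamma_1'(t_0)=\pm E(0)$ in two dimensions. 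Combining, $\partial_t\partial_s\phi(t_0,s_0)=\mp\,b'(0)\neq0$, which is \eqref{g.2}.

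The main obstacle is the bookkeeping in \eqref{g.2}: one must verify that the variation field is an honest Jacobi field, commute the covariant $t$- and $s$-derivatives correctly, and then deploy the two perpendicularity conditions in the right places---one to guarantee $|b(r_0)|=1$ (hence $b'(0)\neq0$) and the other to guarantee $\langle\tilde\gamma_1'(t_0),E(0)\rangle\neq0$. By contrast \eqref{g.1} reduces to a Gauss--Bonnet comparison, whose only delicate point is checking that the quadrilateral is genuinely nondegenerate and bounds an embedded disk; this holds by convexity of geodesic polygons in a Cartan--Hadamard surface.
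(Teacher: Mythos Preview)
Your argument is correct.  For \eqref{g.1} both you and the paper use the same angle-sum/Gauss--Bonnet idea, but the paper handles the degenerate case explicitly: if $t_0=t_1$ and $s_0\ne s_1$ one obtains a geodesic \emph{triangle} with two right angles, already impossible in negative curvature.  Your sentence ``since two distinct geodesics meet in at most one point we must have $P_0\ne P_1$ and $Q_0\ne Q_1$'' is defensible---if $P_0=P_1$ the two perpendiculars coincide as geodesics through $P_0$, and that geodesic would then meet $\tilde\gamma_2$ in two points---but it needs exactly that line of unpacking.  For \eqref{g.2} the approaches genuinely differ.  The paper works in geodesic normal coordinates centered at $\tilde\gamma_1(t_0)$, so that $\tilde\gamma_1$ is the $x_1$-axis and $\partial_t\phi(0,s)=-x_1(s)/|x(s)|$; differentiating in $s$ gives $\partial_s\partial_t\phi(0,s_0)=-x_1'(s_0)/|x_2(s_0)|$, and the two orthogonality conditions become $x_1(s_0)=0$, $x_2'(s_0)=0$, whence $x_1'(s_0)\ne0$.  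This is shorter and avoids Jacobi-field machinery; your computation is more intrinsic and makes the role of the two perpendicularity hypotheses (one forcing $b(r_0)=\pm1$, the other forcing $\tilde\gamma_1'(t_0)=\pm E(0)$) more transparent, and would generalize more readily beyond dimension two.  Neither argument for \eqref{g.2} invokes negative curvature; only \eqref{g.1} needs it.
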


\begin{proof}  To prove \eqref{g.1} we first note that if  $\tilde \gamma_1(t_0)\ne \tilde \gamma_2(s_0)$ then
$|\partial_t \phi(t_0,s_0)|+|\partial_s\phi(t_0,s_0)|=0$ if and only if the geodesic connecting the points $\tilde \gamma_1(t_0)$ and $\tilde \gamma_2(s_0)$
is perpendicular to both $\tilde \gamma_1$ and $\tilde \gamma_2$ at the unique intersection points.  Since $({\mathbb R}^2,\tilde g)$ has negative
curvature there cannot be another point $(t_1,s_1)\in {\mathbb R}\times {\mathbb R}$ with this property.  For if $t_0\ne t_1$ and $s_0\ne s_1$ the 
geodesic quadrilateral with vertices $\tilde \gamma_1(t_0), \tilde \gamma_1(t_1), \tilde \gamma_2(s_0)$ and $\tilde \gamma_2(s_1)$ 
would have
total angle $2\pi$, which is impossible due to the fact that $({\mathbb R}^2, \tilde g)$ is negatively curved, and similarly if $t_0=t_1$ but $s_0\ne s_1$ then the geodesic triangle with vertices $\tilde \gamma_1(t_0), \tilde \gamma_2(s_0)$ and $\tilde \gamma_2(s_1)$ would have total angle of more than
$\pi$, which is also impossible.

To prove \eqref{g.2} we may assume that $t_0=0$ and work in geodesic normal coordinates vanishing at $\gamma_1(0)$ so that $\gamma_1$
is the $x_1$-axis, i.e., $\gamma_1(t)=(t,0)$.  Then if $\gamma_2(s)=(x_1(s),x_2(s))\ne (0,0)$,
$$\frac{\partial \phi}{\partial t}(0,s)=\frac{-x_1(s)}{\sqrt{x^2_1(s)+x^2_2(s)}}.$$
Our assumption that $\frac{\partial \phi}{\partial t}(0,s_0)=0$ and $\gamma_1(0)\ne \gamma_2(s_0)$ means that $x_1(s_0)=0$ and $x_2(s_0)\ne 0$.
In our coordinates the geodesic connecting $(0,0)=\gamma_1(0)$ and $\gamma_2(s_0)$ is the $x_2$-axis, and for it to be orthogonal to $\gamma_2$
at $\gamma_2(s_0)$, we must have $x_2'(s_0)=0$ and so $x'_1(s_0)\ne 0$.  But then
$$\frac{\partial^2 \phi}{\partial s\partial t}(0,s_0)=-\frac{x_1'(s_0)}{|x_2(s_0)|}\ne 0,$$
which is \eqref{g.2}.
\end{proof}

We also need the following simple stationary phase lemma.

\begin{lemma}\label{statlemma}  Let $a\in C^\infty_0(\Rt)$ and assume that $\phi\in C^\infty(\Rt)$ is real.
Put
$$I(\la)=\iint e^{i\la \phi(t,s)}a(t,s)\, dt ds, \quad \la\ge 1.$$
Then
\begin{equation}\label{ss.1}
|I(\la)|\le C\la^{-1} \, \, \, \text{if } \, \nabla_{t,s}\phi(t,s)\ne 0, \, \, (t,s)\in \text{supp }a.
\end{equation}
Also, if there is a unique point $(t_0,s_0)\in \text{supp }a$ at which $\nabla_{t,s}\phi(t_0,s_0)=0$ and if $\frac{\partial^2\phi}{\partial t \partial s}(t_0,s_0)\ne 0$
then given any $\e>0$ there is a constant $C_\e$ so that
\begin{equation}\label{ss.2}
|I(\la)|\le \e \la^{-\frac12}+C_\e \la^{-1}.
\end{equation}
\end{lemma}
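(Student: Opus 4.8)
The plan is to treat the two assertions \eqref{ss.1} and \eqref{ss.2} separately, both via standard non-stationary and stationary phase arguments, but with careful attention in the second part to tracking the $\e$-dependence so that the leading coefficient can be made small.

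For \eqref{ss.1}, since $\nabla_{t,s}\phi$ is nonvanishing on the compact set $\text{supp }a$, I would integrate by parts using the first-order differential operator
$$
L=\frac{1}{i\la}\,\frac{\nabla_{t,s}\phi\cdot \nabla_{t,s}}{|\nabla_{t,s}\phi|^2},
$$
which satisfies $L\,e^{i\la\phi}=e^{i\la\phi}$. One application of $L$ transfers a factor of $\la^{-1}$ onto the amplitude while keeping the integrand compactly supported and smooth, the denominator being bounded below by a positive constant on $\text{supp }a$. This immediately yields the bound $|I(\la)|\le C\la^{-1}$. (In fact iterating gives $O(\la^{-N})$ for any $N$, but $\la^{-1}$ is all that is needed.)

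For \eqref{ss.2}, I would first fix a small parameter $\delta>0$, to be chosen at the end in terms of $\e$, and split the amplitude using a partition of unity $a=a_0+a_1$, where $a_0$ is supported in the disk $\{|(t,s)-(t_0,s_0)|<\delta\}$ and $a_1$ is supported away from the critical point. On $\text{supp }a_1$ the gradient is bounded away from zero, so the non-stationary argument of \eqref{ss.1} gives a contribution of size $O_\delta(\la^{-1})$, which is absorbed into the $C_\e\la^{-1}$ term. For the piece $a_0$ near the critical point, the key structural hypothesis is that $\phi$ has a \emph{nondegenerate} critical point there: since $\partial_t\partial_s\phi(t_0,s_0)\ne 0$, the Hessian has nonvanishing off-diagonal entry. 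I would invoke the two-dimensional stationary phase expansion, whose leading term is
$$
(2\pi)\,\la^{-1}\,\bigl|\det \mathrm{Hess}\,\phi(t_0,s_0)\bigr|^{-1/2}\,e^{i\la\phi(t_0,s_0)}\,e^{i\pi\sigma/4}\,a_0(t_0,s_0)+O(\la^{-2}),
$$
so that the piece near the critical point is in fact $O(\la^{-1})$ \emph{provided the Hessian is nondegenerate}. The hypothesis $\partial_t\partial_s\phi\ne 0$ alone does not guarantee nondegeneracy of the full Hessian, so this is the crux: I would argue that because $a_0$ is supported in a disk of radius $\delta$, the portion of $I(\la)$ coming from $a_0$ is bounded by $C\,(\text{area of support})=C\delta^2$ trivially, \emph{and} by the van der Corput / stationary phase bound $C\la^{-1/2}\cdot(\text{diam})$ along each coordinate direction once one exploits $\partial_t\partial_s\phi\ne 0$. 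Optimizing, one first does the $t$-integral for fixed $s$ using the one-dimensional van der Corput lemma—$\partial_t\partial_s\phi\ne 0$ ensures $\partial_t\phi$ vanishes to first order in $t$ along a curve transverse to the $s$-direction—gaining $\la^{-1/2}$, then integrates the resulting $O(\la^{-1/2})$ bound over an $s$-interval of length $O(\delta)$, producing a bound $C\delta\,\la^{-1/2}$ for the $a_0$ piece. Choosing $\delta$ so that $C\delta<\e$ then yields $|I(\la)|\le \e\la^{-1/2}+C_\e\la^{-1}$.

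The main obstacle I anticipate is precisely the last optimization: making the coefficient of $\la^{-1/2}$ small rather than merely bounded. The honest mechanism is that the smallness comes from the \emph{length} of the $s$-interval over which the stationary set lives, which shrinks with $\delta$; the condition $\partial_t\partial_s\phi(t_0,s_0)\ne 0$ is exactly what is needed to run van der Corput in $t$ \emph{uniformly} in $s$ near the critical point, so that the single $\la^{-1/2}$ gain in $t$ is preserved after integrating in $s$. I would take care to verify that on $\text{supp }a_0$ (for $\delta$ small) one has $|\partial_t^2\phi|$ or $|\partial_t\phi|$ controlled so that the van der Corput hypotheses hold uniformly; continuity of the second derivatives together with $\partial_t\partial_s\phi(t_0,s_0)\ne 0$ makes this routine once $\delta$ is small enough.
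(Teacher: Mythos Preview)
Your argument for \eqref{ss.1} is fine and matches the paper.  For \eqref{ss.2} you have correctly identified both the mechanism (localize to a $\delta$-disk, gain $\la^{-1/2}$ in one variable, then integrate over an interval of length $O(\delta)$ in the other) and the obstacle (the full Hessian need not be nondegenerate).  But there is a genuine gap in your uniformity claim for the $t$-integral.  You assert that ``$\partial_t\partial_s\phi\ne0$ ensures $\partial_t\phi$ vanishes to first order in $t$,'' and that continuity of second derivatives plus $\partial_t\partial_s\phi(t_0,s_0)\ne0$ makes the van der Corput hypothesis in $t$ ``routine.''  This is not correct: the relevant second derivative for one-dimensional stationary phase in $t$ is $\partial_t^2\phi$, and nothing prevents $\partial_t^2\phi(t_0,s_0)=0$.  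When that happens, both $\partial_t\phi$ and $\partial_t^2\phi$ vanish at $(t_0,s_0)$, so on $\text{supp }a_0$ neither is bounded below and your uniform $C\la^{-1/2}$ bound for the $t$-integral fails at $s=s_0$.

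The fix is a simple case split, which is exactly what the paper does.  If $\partial_t^2\phi(t_0,s_0)=0$, then the Hessian determinant at the critical point is $\phi''_{tt}\phi''_{ss}-(\phi''_{ts})^2=-(\phi''_{ts})^2\ne0$, so the Hessian is automatically nondegenerate and two-dimensional stationary phase gives $|I(\la)|\le C\la^{-1}$, better than \eqref{ss.2}.  If instead $\partial_t^2\phi(t_0,s_0)\ne0$, then for $\delta$ small enough $|\partial_t^2\phi|$ is bounded below on $\text{supp }a_0$, your one-dimensional stationary phase in $t$ gives a bound $C\la^{-1/2}$ with $C$ independent of $\delta$, and integrating in $s$ over an interval of length $O(\delta)$ gives $C\delta\la^{-1/2}$ as you wanted.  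So the two halves of your sketch---the two-dimensional stationary phase you dismissed and the one-dimensional argument you kept---are each valid in precisely the case where the other fails.
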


\begin{proof}  The first assertion, \eqref{ss.1}, just follows via integration by parts.  To prove \eqref{ss.2} we assume that there is a $(t_0,s_0)\in \supp a$
at which $\nabla \phi$ vanishes but $\nabla \phi(t,s)\ne0$, $(t,s)\in \supp a\backslash \{(t_0,s_0)\}$ and $\partial_t\partial_s \phi(t_0,s_0)\ne 0$.
We can split matters into two further cases: (i) $\partial^2_t\phi(t_0,s_0)=0$ and (ii) $\partial^2_t\phi(t_0,s_0)\ne 0$.  

In case (i), we note that our assumptions mean that at $(t_0,s_0)$ the mixed Hessian of $\phi$ satisfies
$$\text{det } \left( \begin{array}{cc}
\phi''_{tt}&\phi''_{ts}
\\
\phi''_{ts}&\phi''_{ss}
\end{array}\right) \ne0,
$$
and, therefore, by two-dimensional stationary phase %(see e.g.  \cite[Chapter 1]{S3}), 
we have $|I(\la)|\le C\la^{-1}$.

To finish, it suffices to show that we have \eqref{ss.2} under the assumption that
$$\nabla \phi(t,s)\ne 0, \, \, \, (t,s)\in \supp a\backslash \{(t_0,s_0)\}, \, \, \text{and } \, \partial_t^2\phi(t_0,s_0)\ne 0.$$
If we let $\beta$ be as above, it follows from \eqref{ss.1} that given any fixed $\e>0$, we have
\begin{equation}\label{ss.3}
I_1(\la)=\iint e^{i\la\phi(t,s)}\bigl(1-\beta(\e^{-1}(t-t_0))\beta(\e^{-1}(s-s_0))\bigr) \, a(t,s)\, dt ds \, = \, O_\e(\la^{-1}).
\end{equation}
Furthermore, if $\e>0$ is chosen so that $\partial_t^2\phi\ne 0$ on $\text{supp }\beta(\e^{-1}(\, \cdot\, -t_0))\beta(\e^{-1}(\, \cdot\, -s_0))$,
it follows from one-dimensional stationary phase that for each fixed $0\le s\le1$ we have
$$\Bigl| \beta(\e^{-1}(s-s_0))\int_{-\infty}^\infty e^{i\la\phi(t,s)}\beta(\e^{-1}(t-t_0)) \, a(t,s)\, dt\Bigr|\le C\la^{-\frac12},$$
where $C$ is independent of $\e>0$ (cf. the proof of \cite[Theorem 1.1.1]{S3}).  This clearly implies that 
we have the uniform bounds $|I(\la)-I_1(\la)|\le A\e \la^{-\frac12}$
for all small $\e>0$.  By combining this inequality with \eqref{ss.3} we deduce
that \eqref{ss.2} holds in this case as well, which finishes the proof.
\end{proof}

%\begin{lemma}\label{statlemma}  Let $\phi(t,s)\in C^\infty(\R\times \R)$ and $\varphi(t)\in C^\infty(\R)$ be real.  Then if $a(t,s)\in C^\infty(\R\times \R)$,
%$\delta>0$ and
%\begin{equation}\label{s.1}
%|\partial_t\phi(t,s)|+|\partial_s\phi(t,s)|\ge \delta, \quad (t,s)\in \text{supp }a\cap [0,1]\times [0,1],
%\end{equation}
%then
%\begin{equation}\label{s.2}
%\Bigl| \int_0^1\int_0^1 e^{i\la \phi(t,s)}a(t,s)\, dt ds\Bigr|\le C\la^{-1},
%\end{equation}
%where $C=C(\delta, \phi,a)$ depends on $\delta$ and the size of finitely many derivatives of $\phi$ and $a$ on $[0,1]\times [0,1]$.  Similarly,
%if $b(t)\in C^\infty(\R)$ and
%\begin{equation}\label{s.3}
%|\partial_t\varphi(t)|+|\partial^2_t\varphi(t)|\ge \delta, \quad 0\le t\le 1,
%\end{equation}
%then
%\begin{equation}\label{s.4}
%\Bigl|\int_0^1 e^{i\la \varphi(t)} b(t)\, dt\Bigr|\le C\la^{-\frac12},
%\end{equation}
%where $C=C(\delta,\varphi,b)$ depends on $\delta$ and the size of finitely many derivatives of $\varphi$ and $b$ in $[0,1]$.
%\end{lemma}

To use Lemmas~\ref{geomlemma}-\ref{statlemma}, we also require another result which is essentially Lemma 3.1 in \cite{ChSo}.

\begin{lemma}\label{lemma2.4}
Given $\alpha\in \Gamma$ set
\begin{multline}\label{3.11}
K^\gamma_{T,\lambda,\alpha}(t,s)
\\
=\frac1{\pi }\int_{-\infty}^\infty (1-\beta(\tau)) \, \Hat \chi(\tau/T) e^{-i\la\tau} 
\bigl(\cos \tau \sqdt\bigr)(\tilde \gamma(t), \alpha(\tilde \gamma(s)))\, d\tau, \, \, |t|, |s|\le 1/2.
\end{multline}
Then if $\alpha\ne Identity$ and we set
\begin{equation}\label{3.13}
\phi_{\gamma,\alpha}(t,s)=d_{\tilde g}(\tilde \gamma(t),\alpha(\tilde \gamma(s))), \quad |t|, |s|\le 1/2,
\end{equation}
then we can write for $|t|, |s|\le 1/2$
\begin{multline}\label{3.14}
K^\gamma_{T,\la,\alpha}(t,s)=
%\\
\la^{\frac12}
w(\tilde \gamma(t),\alpha(\tilde \gamma(s)))
\sum_\pm a_\pm(T,\la; \,  \phi_{\gamma,\alpha}(t,s))e^{\pm i\la \phi_{\gamma,\alpha}(t,s)}
\\
+R^\gamma_{T,\la,\alpha}(t,s),
\end{multline}
where $w(x,y)$ is a smooth bounded function on $\Rtwo\times\Rtwo$ and where for each $j=0,1,2,\dots$ there is a constant
$C_j$ independent of $T,\la\ge1$ so that
\begin{equation}\label{3.15}
|\partial_r^ja_\pm(T,\la; \, r)|\le C_j r^{-\frac12-j}, \quad r\ge 1,
\end{equation}
and for a constant $C_T$ which is independent of $\gamma$, $\alpha$ and $\la$
\begin{equation}\label{3.16}
|R_{T,\la,\alpha}^\gamma(t,s)|\le C_T\la^{-1} 
%\quad \text{if } \, K^\gamma_{T,\la,\alpha}\not\equiv 0
.
\end{equation}
%Additionally,  for each $j=0,1,2,\dots$ 
%and $k=0,1,2,\dots$ we have
% \begin{equation}\label{3.18}
%  |\partial_t^j\partial_s^k \phi_{\gamma,\alpha}(t,s)|
% \le C_{T,j,k}, \quad \text{if } \, \alpha \ne Identity, \, \text{and } \, K^\gamma_{T,\la,\alpha}\not\equiv0.
% \end{equation}
\end{lemma}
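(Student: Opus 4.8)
The plan is to realize $K^\gamma_{T,\la,\alpha}$ as a frequency-localized wave kernel on the Cartan--Hadamard cover $(\Rtwo,\tg)$ and to extract its leading oscillation by H\"ormander's parametrix together with stationary phase. Throughout write $\tilde x=\tilde\gamma(t)$, $\tilde y=\alpha(\tilde\gamma(s))$, and $d=\pg(t,s)=\dgt(\tilde x,\tilde y)$. First I would record the geometric input. Since the ball $\{\tilde z:\dgt(0,\tilde z)<10\}$ lies in the fundamental domain $D$ and the translates $\alpha(D)$ are disjoint, for $\alpha\ne Identity$ and $|t|,|s|\le\tfrac12$ one has $d\ge 10-\tfrac12>9$. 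By finite propagation speed $(\cos\tau\sqdt)(\tilde x,\tilde y)$ vanishes for $|\tau|<d$, so the $\tau$-integrand in \eqref{3.11} is supported in $|\tau|\ge 9$, where $1-\beta(\tau)\equiv 1$; hence for $\alpha\ne Identity$ the cutoff $1-\beta$ may simply be dropped, and writing $\cos\tau\sqdt=\tfrac12(e^{i\tau\sqdt}+e^{-i\tau\sqdt})$ reduces matters to the two half-wave kernels.

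Next I would insert H\"ormander's parametrix for $e^{\pm i\tau\sqdt}$ on $(\Rtwo,\tg)$. Because $(M,g)$ has negative curvature, $(\Rtwo,\tg)$ has no conjugate points, so $\dgt$ is smooth off the diagonal and the parametrix is globally valid modulo a smoothing operator whose bounds depend only on the uniformly controlled geometry of the compact $M$. Writing the spatial frequency in geodesic polar coordinates $\eta=r\omega$ about $\tilde y$ and setting $v=\exp^{-1}_{\tilde y}\tilde x$, $|v|=d$, each half-wave contribution becomes $\iint e^{ir(\langle v,\omega\rangle\pm\tau)}\,b_\pm\, r\,dr\,d\omega$ paired against the weight $\hat\chi(\tau/T)e^{-i\la\tau}$, where $b_\pm$ is a classical symbol of order $0$. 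Performing the $\tau$-integral first gives $\tfrac1{2\pi}\int\hat\chi(\tau/T)e^{i\tau(r-\la)}\,d\tau=T\chi(T(\la-r))$, which concentrates $r$ in a window of width $\sim T^{-1}$ about $\la$.

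Then I would apply stationary phase in $\omega\in S^1$: the phase $r\langle v,\omega\rangle$ has the two nondegenerate critical points $\omega=\pm v/|v|$, with critical values $\pm rd$ and Hessian of size $\sim rd\gtrsim \la d$, producing the oscillations $e^{\pm ird}$ and a gain $(rd)^{-1/2}$. Carrying out the remaining $r$-integral against $T\chi(T(\la-r))$ evaluates the amplitude at $r\approx\la$ and, since $\chi=\rho^2$ is even, yields exactly $e^{\pm i\la d}$ together with the factor $\hat\chi(d/T)$ coming from the width-$T^{-1}$ concentration. Collecting the leading Van Vleck density $\Theta^{-1/2}$ (smooth and, by negative curvature, bounded) into $w(\tilde x,\tilde y)$ and the remaining universal amplitude into $a_\pm(T,\la;d)$ gives \eqref{3.14}, with overall size $\la^{1/2}d^{-1/2}$. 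The symbol $a_\pm$ is of order $-\tfrac12$ in $r=d$, with leading part $\sim\hat\chi(d/T)\,d^{-1/2}$ and successive terms smaller by $(\la d)^{-1}$; since differentiating $\hat\chi(r/T)$ costs $T^{-1}\le\tfrac12 r^{-1}$ on the support $d\le T/2$, one obtains \eqref{3.15} with constants independent of $T,\la$. The remainder $R^\gamma_{T,\la,\alpha}$ then collects only the smoothing error of the parametrix integrated over $|\tau|\le T/2$; it is $O_{T,N}(\la^{-N})$, in particular $\le C_T\la^{-1}$, giving \eqref{3.16}.

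The main obstacle is uniformity in the deck transformation $\alpha$. Because $d=\pg(t,s)$ ranges up to $\sim T/2$, the parametrix is used at arbitrarily large distances, and both the $d^{-1/2}$ decay of $a_\pm$ and the constant $C_T$ in \eqref{3.16} must be independent of $\alpha$ and $\gamma$. This is exactly where negative curvature is essential: the absence of conjugate points makes the H\"ormander construction globally valid, the curvature bounds pulled back from the compact $M$ control the Van Vleck factor and all symbol seminorms uniformly over $\Gamma$, and the critical points $\omega=\pm v/|v|$ stay nondegenerate with Hessian $\gtrsim d$. The only genuinely $T$-dependent bookkeeping is that the smoothing error, weighted by $\hat\chi(\tau/T)$ and integrated over $|\tau|\le T/2$, costs powers of $T$ while gaining arbitrary powers of $\la$, which is harmless for \eqref{3.16}.
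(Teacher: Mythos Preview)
Your argument is correct and follows essentially the same route as the paper: a parametrix for the wave propagator on the Cartan--Hadamard cover, followed by stationary phase to extract the $e^{\pm i\la d}$ oscillation with $d^{-1/2}$ decay, with the uniformity in $\alpha$ coming from the absence of conjugate points and the bounded geometry inherited from the compact base. The paper's packaging is slightly different---it uses the Hadamard parametrix for $\cos\tau\sqdt$ and reduces the leading term explicitly to the Euclidean kernels $K_0,K_1$ computed via the asymptotics of $\widehat{d\theta}$, rather than doing stationary phase in $\omega$ directly---and it keeps the cutoff $1-\beta$ throughout (handling it as $K_0-K_1$) instead of discarding it via finite propagation speed as you do; one small point is that the remainder $R^\gamma_{T,\la,\alpha}$ also absorbs the lower-order symbol terms of the parametrix (order $\le -2$), not only the smoothing error, but these are $O_T(\la^{-3/2})$ and cause no trouble.
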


We shall postpone the proof of this result until the end and use it now, along with Lemmas~\ref{geomlemma} and \ref{statlemma}, to prove \eqref{2.6}.
Recall that \begin{multline}\label{2.21}
\bigl(\cos \tau\sqdt \bigr)(\tilde x,\tilde y)=0 \quad \text{if } \, \, d_{\tilde g}(\tilde x,\tilde y)>|\tau|, \\ \text{and } \, \bigl(\cos \tau\sqdt \bigr)(\tilde x,\tilde y)
\, \, \text{is smooth if  } \, \, \dgt(x,y)\ne |\tau|.
\end{multline}
Therefore since $\beta(\tau)=1$ for $|\tau|\le 3$ we have $(1-\beta(\tau))(\cos \tau \sqrtg)(\tilde \gamma(t),\tilde \gamma(s))\in C^\infty$ if $0\le s,t\le1$,
and so
$$\int_{-T/2}^{T/2} (1-\beta(\tau))\, \Hat  \chi(\tau/T) e^{-i\tau\la} \bigl(\cos \tau\sqrtg \bigr)(\tilde \gamma(t),\tilde\gamma(s))
\, d\tau =O_T(\la^{-N}),
$$
for any $N=1,2,3,\dots$.

As a result, if we use \eqref{2.8}, we conclude that we would have \eqref{2.6} if we could show that
\begin{multline}\label{2.22}
\sum_{\alpha \in \Gamma \backslash Identity}
\Bigl|\iint\int_{-T/2}^{T/2} b(t,s)(1-\beta(\tau))\Hat  \chi(\tau/T) e^{-i\tau\la} \bigl(\cos \tau\sqrtg \bigr)(\tilde \gamma(t),\alpha (\tilde\gamma(s)))
\, d\tau dt ds\Bigr|
\\
\le 1+C_T\la^{-\frac12}.
\end{multline}
By \eqref{2.21} there are are only finitely many nonzero summands here (actually $O(\exp(cT))$ ones).  Consequently, we would have
\eqref{2.22} if we could show that given $\e>0$ and $Identity \ne \alpha \in \Gamma$ we have
\begin{multline}\label{2.23}
\Bigl|\iint\int_{-T/2}^{T/2}b(t,s) (1-\beta(\tau))\Hat  \chi(\tau/T) e^{-i\tau\la} \bigl(\cos \tau\sqrtg \bigr)(\tilde \gamma(t),\alpha (\tilde\gamma(s)))
\, d\tau dt ds\Bigr|
\\
\le \e + C_{\alpha,\e}\la^{-\frac12}.
\end{multline}

Note that since $\tilde \gamma$ is a geodesic in $(\Rt,\tilde \gamma)$ and $\alpha$ is an isometry, it follows that $\alpha(\tilde \gamma)$ is also a geodesic.  It is a geodesic
which is different from $\tilde \gamma$ if $\alpha$ is not in the stabilizer subgroup of $\Gamma$ of all deck transformations preserving $\tilde \gamma$.  If our geodesic $\gamma$ in $(M,g)$ is not a periodic geodesic then the stabilizer subgroup is just the identity element.  If $\gamma$ is a periodic geodesic in $M$ of minimal period $\ell>0$, then we must have $\ell \ge 10$, because of our assumption regarding the injectivity radius.  In this case, every nontrivial element of the stabilizer group satisfies $\alpha(\tilde \gamma(s))=\tilde \gamma(s+k\ell)$ for some $k\in {\mathbb Z}\backslash 0$.  By Lemma~\ref{lemma2.4} for such
a $\alpha$ with $k\ne 0$, modulo a term which is $O_\alpha(\la^{-1})$ we have that the left side of \eqref{2.23} is equal to the sum over $\pm$ of
$$\la^{\frac12}\Bigl|\iint b(t,s) \, w(\tilde \gamma(t), \tilde \gamma(s+k\ell))\,  a_\pm\bigl(T,\la; |t-s-k\ell |)\, e^{\pm i\la |t-s-k\ell |} \, dt ds\Bigr|,
$$
with $a_\pm$ as in \eqref{3.15}.  Since $b\in C^\infty_0(\Rt)$ vanishes when $|t|$ or $|s|$ is larger
than $\frac12$ and $w\in C^\infty(\Rt)$, by a simple integration by parts argument this term is $O_\alpha(\la^{-\frac12})$, which means that we have
\eqref{2.23} for all nontrivial elements of the stabilizer group of $\tilde \gamma$.

To prove that we also have \eqref{2.23} for the remaining case where $\alpha\in \Gamma$ is not in the stabilizer group, by the above, it is enough to show that, if
$\phi_{\gamma,\alpha}(t,s)$ is as in \eqref{3.13}, then given $\e>0$ there is a constant $C_{\alpha,\e}$ so that
\begin{multline}\label{last}\Bigl|\iint b(t,s) \, w(\tilde \gamma(t), \alpha(\tilde \gamma(s))) \, 
a_\pm(T,\la; \phi_{\gamma,\alpha}(t,s)) \, e^{\pm i\la \phi_{\gamma,\alpha}(t,s)} \, dt ds\Bigr|
\\
\le \e\la^{-\frac12}+C_{\alpha,\e}\la^{-1}.\end{multline}
By Lemma~\ref{geomlemma} the phase function here either satisfies $\nabla_{t,s}\phi_{\gamma,\alpha}\ne0$ on the support of the
integrand, or there is a unique point $(t_0,s_0)$ in the support where $\nabla_{t,s}\phi_{\gamma,\alpha}$ vanishes and at that point 
$\frac{\partial \phi_{\gamma,\alpha}}{\partial t\partial s}(t_0,s_0)\ne 0$.  In the former case by \eqref{ss.1} the left
side of \eqref{last}  $O_{\alpha}(\la^{-1})$, which is more than is required.  In the latter case, we obtain
the estimate from Lemma~\ref{statlemma}.  Thus, we have established \eqref{last}, which except for the proof of Lemma~\ref{lemma2.4}, completes the proof of Theorem~\ref{theoremneg}.

\begin{proof}[Proof of Lemma~\ref{lemma2.4}]
Since the injectivity radius of $(M,g)$ is 10 or more, it follows that $\dgt(\tilde \gamma(t),\alpha(\tilde \gamma(s)))\ge 10$ 
if $|t|, |s|\le 1/2$.  Also, as noted before, for each $T$, by Huygens principle, there are only finitely many terms in \eqref{3.11}
that we must consider.
%and hence
%\eqref{3.18} is valid since the distance function is smooth away from the diagonal.  Also since $(\cos \tau\sqrtg )(x,y)$ is smooth
%when $\dgt (x,y)<|\tau|$ and $\beta(\tau)=1$ for $|\tau|\le 3$, we need only show that the other assertions in the lemma are valid
%when $\pg(t,s)\ge 1$.
%

Next, let for $x\in \Rt$, $|x|\ge1$,
\begin{align*}
K_0(|x|)&=\frac1\pi \int_{\Rt}\int_{-\infty}^\infty \Hat \chi(\tau/T)e^{i\la\tau}\cos(\tau|\xi|) e^{ix\cdot \xi} \, d\tau d\xi
\\
&=\int_{\Rt} T\chi(T(\la-|\xi|)) e^{ix\cdot \xi} \, d\xi \, + \, \int_{\Rt}T\chi(T(\la+|\xi|)) e^{ix\cdot \xi}\, d\xi.
\end{align*}
Also let $\Phi_T(\xi)\in {\mathcal S}(\R)$ be defined by the Fourier transform $\Hat \Phi_T(\tau)=\beta(\tau)\Hat \chi(\tau/T)$ and put
\begin{align*}
K_1(|x|)&=\frac1\pi \int_{\Rt}\int_{-\infty}^\infty \beta(\tau) \Hat\chi(\tau/T)e^{i\la \tau}\cos(\tau|\xi|) \, d\tau d\xi
\\
&=\int_{\Rt}\Phi_T(\la-|\xi|) e^{ix\cdot \xi} \, d\xi \, + \, \int_{\Rt}\Phi_T(\la+|\xi|)e^{ix\cdot \xi} \, d\xi.
\end{align*}
Recall that the Fourier transform of Lebesgue measure on the circle is of the form
$$\widehat{d\theta}(y)=\int_{S^1}e^{iy\cdot (\cos\theta,\sin\theta)} \, d\theta
=|y|^{-\frac12}\sum_\pm a_\pm(|y|) e^{\pm i |y|}, \quad |y|\ge 1,$$
where for every $j=0,1,2,\dots$ we have
$$|\partial^j_ra_\pm(r)|\le C_jr^{-j}, \quad r\ge1.$$
Also $\widehat{d\theta}\in C^\infty(\R)$.  Therefore, if $\la, T\ge1$, modulo a term which is $O((\la|\xi|)^{-N})$ for any $N$ independent of $T$, we have
\begin{align}\label{2.26}
K_0(|x|)&=|x|^{-\frac12}\int_0^\infty T\chi(T(\la-r)) \, \sum_\pm a_\pm (|x|r)e^{\pm ir|x|} \, r^{\frac12}dr
\\
&=\la^{\frac12}|x|^{-\frac12}\sum_\pm b_\pm (T,\la; |x|)e^{\pm i\la|x|},
\notag
\end{align}
where one easily sees that
\begin{equation}\label{2.27}
|\partial^j_r b_\pm(T,\la;r)|\le C_jr^{-j}, \quad r,\la, T\ge 1, \, \, j=0,1,2\dots .
\end{equation}
Similar arguments show that, modulo an $O((\la|x|)^{-N})$ error we also have
\begin{equation}\label{2.28}
K_1(|x|)=\la^{\frac12}|x|^{-\frac12}\sum_\pm \tilde b_\pm(T,\la; |x|)e^{\pm i\la|x|},
\end{equation}
where $\tilde b_\pm$ satisfy the bounds in \eqref{2.27}.

To use this we shall use the Hadamard parametrix (see \cite{SoggeHang}).  For $x,y\in \Rt$ we can write
\begin{align}\label{2.29}
\bigl(\cos \tau \sqrtg\bigr)(x,y)&=(2\pi)^{-2}w(x,y)\int_{\Rt}e^{i\dgt(x,y)\xi_1}\cos(\tau|\xi|)\, d\xi
\\
&+\sum_\pm \int_{\Rt}e^{i\dgt(x,y)\xi_1}e^{\pm i\tau|\xi|}a_\pm(\tau,x,y,|\xi|)\, d\xi +R(\tau,x,y),
\notag
\end{align}
where we can take the remainder to satisfy
$$|R(\tau,x,y)|+|\partial_\tau R(\tau,x,y)|\le C_T, \quad \text{if } \, |\tau|\le T,$$
and $a_\pm$ is a symbol of order $-2$ which, in particular, satisfies
\begin{equation}\label{2.30}
|\partial_\tau^j a_\pm(\tau,x,y,|\xi|)|\le C_{T,j}(1+|\xi|)^{-2}, \quad \text{if } \, \, |\tau|\le 2T, \, \, j=0,1,2,\dots ,
\end{equation}
and where the leading coefficient $w$ is smooth, nonnegative and satisfies
$$w(x,y)\le C$$
independent of $x,y\in \Rt$, by volume comparison theorems (see \cite{Berard}, \cite{SZ4}).

Clearly if we replace $(\cos \tau \sqrtg)(\tilde \gamma(t),\alpha(\tilde \gamma(s)))$ by $R(\tau,\tilde \gamma(t),\alpha(\tilde \gamma(s)))$ in
\eqref{3.11} we can integrate by parts in $\tau$ to see that the resulting expression satisfies the bounds in \eqref{3.16}.

If we take $x=\tilde \gamma(t)$ and $y=\alpha(\tilde \gamma(s))$, $|t|, |s|\le 1/2$ for the first term in the right side of \eqref{2.29} and replace the 
cosine-transform kernel in \eqref{3.11} by this expression then we will exactly obtain $(2\pi)^2$ times
$$w(\tilde \gamma(t),\alpha(\tilde \gamma(s))) K_0(\pg(t,s))-w(\tilde \gamma(t),\alpha(\tilde \gamma(s))) K_1(\pg(t,s)),
$$
which can be taken as the first term in the right side of \eqref{3.14}, with \eqref{3.15} being valid.  Finally since \eqref{2.30} holds, the proof
of this last assertion also shows that if in \eqref{3.11} we replace the cosine-transform kernel by the second term in the right side of
\eqref{2.29} we obtain another term satisfying the bounds in \eqref{3.16} (in fact it is $O_T(\la^{-\frac32})$), which completes 
the proof.
\end{proof}

%\begin{remark}  If $\{\gamma(s): \, 0\le s\le L\}$ is a geodesic parameterized by arc length on a compact negatively curved Riemannian surface, the
%then proof of Theorem~\ref{theoremneg} shows that the measures $e_j(\gamma(s))\, ds$ on $[0,L]$ tend to zero in the sense of distributions, by
%which we mean that if $u\in C^\infty([0,L])$ is fixed, we have
%$$\int_0^Tu(s)\, e_j(\gamma(s))\,  ds \to 0, \quad \text{as } \, \, j\to \infty.$$
%\end{remark}

\end{document}